\documentclass[article]{amsart}
\usepackage{cases}
\usepackage{amsmath, amsfonts, pifont, amssymb}
\usepackage{verbatim}
\usepackage[bookmarks=true]{hyperref}
\usepackage{geometry}
 \geometry{
 a4paper,
 left=40mm,right=40mm,
 top=35mm,
 }
\newtheorem{theorem}{Theorem}[section]
\newtheorem{lemma}[theorem]{Lemma}
\newtheorem{proposition}[theorem]{Proposition}

\newcommand{\R}{\mathbb{R}}

\newcommand{\beq}{\begin{equation}}
\newcommand{\eeq}{\end{equation}}
\newcommand{\beqq}{\begin{equation*}}
\newcommand{\eeqq}{\end{equation*}}
\newcommand{\less}{\lesssim}
\newcommand{\gea}{\gtrsim}

\newcommand{\lf}{\left}
\newcommand{\ri}{\right}
\theoremstyle{definition}

\theoremstyle{remark}
\newtheorem{remark}[theorem]{Remark}

\numberwithin{equation}{section}




\numberwithin{equation}{section}
\usepackage[notcite, notref]{showkeys}

\DeclareMathOperator{\supp}{\mathrm{supp}}

\begin{document}
\title{On a bilinear restriction estimate for Schr\"odinger equations on 2D waveguide}
\author{Deng Yangkendi}

\address{Kendi Dengyang
\newline \indent Academy of Mathematics and Systems Science, Chinese Academy of Sciences, Beijing, China.}
\email{dengyangkendi@amss.ac.cn}
\maketitle
\begin{abstract}
In this article, we prove a bilinear estimate for Schr\"odinger equations on 2d waveguide, $\mathbb{R}\times \mathbb{T}$. We hope it may be of use in the further study of concentration compactness for cubic NLS on  $\mathbb{R}\times \mathbb{T}$.
\end{abstract}

\section{Introduction}
\subsection{Statement of main results}
In this article, we consider linear Schr\"odinger equations on the waveguide $\mathbb{R}\times \mathbb{T}$,
\begin{equation}\label{eq: ls}
\begin{cases}
iu_{t}+\Delta u=0,\\
u(0,x)=u_{0}.	
\end{cases}
\end{equation}
and prove the following bilinear estimate,
\begin{theorem}\label{thm: main}
Suppose $1 \ll \delta \ll T^{-1},  p=\frac{12}{7}$ and functions $f, g$ in $L^2(\mathbb{R} \times \mathbb{T})$ satisfy $supp(\widehat{f})\subset \theta_{1}, supp(\widehat{g})\subset \theta_{2} $,  where $\theta_{1}, \theta_{2} \in \mathcal{C_\delta}$ and $dist(\theta_{1}, \theta_{2})\gg \delta$. Then
\begin{equation}\label{eq: main}
\| e^{it\Delta}f \cdot  e^{it\Delta} g    \|_{L^2([0,T] \times \mathbb{R} \times \mathbb{T})}\lesssim  \delta^{2-\frac{4}{p}}   \|\widehat{f} \|_{L^p(\mathbb{R} \times \mathbb{Z})} \|\widehat{g} \|_{L^p(\mathbb{R} \times \mathbb{Z})} .
\end{equation}

 where for $\delta \in 2^\mathbb{N}$, we define
 \begin{equation}
 \mathcal{C}_{\delta}:=\{\theta :\theta=[m\delta,(m+1)\delta]\times \{n\delta, n\delta+1, \cdots, (n+1)\delta\}\subset \mathbb{R} \times \mathbb{Z}, m,n \in \mathbb{Z}  \}.
 \end{equation}
\end{theorem}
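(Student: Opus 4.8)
The plan is to reduce \eqref{eq: main} to one–dimensional Strichartz estimates by expanding in the $\mathbb T$–variable, to prove a loss-free $L^2\times L^2\to L^2$ bilinear estimate this way, and then to reach the exponent $p=\tfrac{12}{7}$ by interpolating it with one further endpoint. First I would exploit the Galilean invariance available on $\R\times\mathbb T$: the Schr\"odinger flow commutes with the boost $u(t,x)\mapsto e^{iv\cdot x-it|v|^2}u(t,x-2vt)$ for every $v\in\R\times\mathbb Z$, hence for $v=(m\delta,n\delta)$ (note $n\delta\in\mathbb Z$ since $\delta\in2^{\mathbb N}$), and both sides of \eqref{eq: main} are invariant under a common such boost of $f$ and $g$ (after the change of variables $x\mapsto x-2vt$, which is measure preserving on $[0,T]\times\R\times\mathbb T$). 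Thus I may assume $\theta_1=[0,\delta]\times\{0,\dots,\delta\}$ and $\theta_2=[a\delta,(a+1)\delta]\times\{b\delta,\dots,(b+1)\delta\}$ with $\max(|a|,|b|)\gg1$.

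Next, write $f=\sum_n e^{inx_2}f_n$ and $g=\sum_m e^{imx_2}g_m$, where $n,m$ run over the $O(\delta)$ integers in the respective caps and $\widehat{f_n},\widehat{g_m}$ are one–variable functions supported on intervals of length $\delta$. Then $e^{it\Delta}f=\sum_n e^{inx_2-itn^2}u_n$ with $u_n=e^{it\partial_{x_1}^2}f_n$, and similarly for $g$, so that $e^{it\Delta}f\cdot e^{it\Delta}g=\sum_\kappa e^{i\kappa x_2}\sum_{n+m=\kappa}e^{-it(n^2+m^2)}u_nv_m$; orthogonality of the characters $e^{i\kappa x_2}$ on $\mathbb T$ reduces the left side of \eqref{eq: main} to an $\ell^2_\kappa$–sum of the $L^2([0,T]\times\R)$–norms of the inner sums. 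For each fixed $\kappa$ I would estimate the inner sum by combining a per–term bound with an almost–orthogonality statement. For the per–term bound: when the one–dimensional frequency intervals of $f_n$ and $g_m$ are $\gg\delta$–separated — equivalently when $|a|\gg1$ — I use the classical one–dimensional bilinear Strichartz estimate $\|u_nv_m\|_{L^2(\R^2)}\lesssim\mathrm{dist}^{-1/2}\|f_n\|_2\|g_m\|_2$; otherwise (the non-transversal regime, in which then $|b|\gg1$) I use the admissible one–dimensional Strichartz bounds $L^6_{t,x}$ and $L^8_tL^4_x$ restricted to $[0,T]$, together with Bernstein's inequality in $x_1$. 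For the almost–orthogonality: two summands $e^{-it(n^2+m^2)}u_nv_m$ and $e^{-it(n'^2+m'^2)}u_{n'}v_{m'}$ with $n+m=n'+m'=\kappa$ interact in $L^2([0,T]\times\R)$ only if their space–time frequency supports overlap, which on $[0,T]$ forces $n^2+(\kappa-n)^2$ and $n'^2+(\kappa-n')^2$ to lie within $O(1/T)$ of each other; since $|\kappa|\gg\delta$ the map $n\mapsto n^2+(\kappa-n)^2$ is a quadratic whose consecutive values are $\gg\delta$–spaced, and since $1/T\gg\delta$ the overlap multiplicity is governed by the number of integers $n\in\{0,\dots,\delta\}$ with $n^2+(\kappa-n)^2$ in a window of length $\sim1/T$. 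This is where the hypotheses $\mathrm{dist}(\theta_1,\theta_2)\gg\delta$ and $T\ll\delta^{-1}$ are used, and it is the step that in the non-transversal regime has to be treated with an honest Gauss–sum (equidistribution/divisor-type) count. Summing the per–$\kappa$ bounds in $\ell^2_\kappa$ should yield the loss-free bilinear estimate
\[
\|e^{it\Delta}f\cdot e^{it\Delta}g\|_{L^2([0,T]\times\R\times\mathbb T)}\lesssim\|\widehat f\|_{L^2(\R\times\mathbb Z)}\,\|\widehat g\|_{L^2(\R\times\mathbb Z)},
\]
which is \eqref{eq: main} at the endpoint $p=2$.

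Finally, to obtain \eqref{eq: main} at $p=\tfrac{12}{7}$ I would prove the estimate also at some exponent $p_0\in(1,\tfrac{12}{7})$ and interpolate: since $\delta^{2-4/r}$ is log-convex in $1/r$, interpolation between $r=2$ and $r=p_0$ covers every $r\in[p_0,2]$, in particular $r=\tfrac{12}{7}$. For the $p_0$–estimate I would run the same scheme after further decomposing $\theta_1,\theta_2$ into unit sub-caps and using on each scale the dispersive bound, which bounds $\|e^{it\Delta}h\|_{L^\infty}$ by $\|\widehat h\|_{L^1}$ when $\widehat h$ lives on a unit cap; this introduces $L^1$–type norms at the unit scale that H\"older upgrades to $L^{p_0}$, the strict inequality $p_0>1$ being forced because a single plane wave has infinite $L^2([0,T]\times\R\times\mathbb T)$–norm, so $p_0=1$ genuinely fails. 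The main obstacle — and the step that fixes all the numerology — is the multiplicity count in the non-transversal regime: one needs an essentially sharp bound, uniform in $\kappa$, for the number of $n\in\{0,\dots,\delta\}$ with $n^2+(\kappa-n)^2$ in a prescribed interval of length $\sim1/T$, and one must verify that its interplay with the one–dimensional Strichartz losses and the $\kappa$–summation — together with the decorrelation produced by the $\mathbb T$–translation $x_2\mapsto x_2-2tb\delta$ that a boost introduces — produces no power of $\delta$ worse than $\delta^{2-4/p}$ and absorbs any logarithmic factors coming from the Gauss sums.
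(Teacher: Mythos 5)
Your strategy diverges from the paper's in a way that introduces a fatal obstruction at the interpolation step. You propose to prove \eqref{eq: main} at $p=2$ (easy, via $L^4$ Strichartz on $\R\times\mathbb T$) and at some $p_0\in(1,\tfrac{12}{7})$, and then interpolate to reach $p=\tfrac{12}{7}$. But the $p_0$ endpoint you need is actually false for $T$ up to $\delta^{-1}$, so no amount of interpolation can recover the theorem in its stated range.

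Concretely, the paper's own counterexample (Appendix B) takes $\widehat f=1_{[0,cT^{-1/2}]\times\{0\}}$, $\widehat g=1_{[0,cT^{-1/2}]\times\{100\delta\}}$ and shows that \eqref{eq: main} with exponent $p$ on $[0,T]$ forces $T\lesssim\delta^{8(p-2)/(4-p)}$. At $p=\tfrac{12}{7}$ this is exactly $T\lesssim\delta^{-1}$, matching the hypothesis; but for any $p_0<\tfrac{12}{7}$ the exponent $8(p_0-2)/(4-p_0)$ is strictly less than $-1$, so the estimate already fails for $T$ in the range $(\delta^{8(p_0-2)/(4-p_0)},\,\delta^{-1})$. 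For example, at $p_0=\tfrac32$ and $T\sim\delta^{-1}$ the left side of \eqref{eq: main} is $\gtrsim\delta^{1/4}$ while the right side is $\lesssim1$. Since interpolation preserves the time interval, your plan would at best give $p=\tfrac{12}{7}$ on $[0,T]$ with $T\ll\delta^{8(p_0-2)/(4-p_0)}$, not on the full range $T\ll\delta^{-1}$. The theorem's $p=\tfrac{12}{7}$ is precisely the extreme $p$ for which the endpoint $T\sim\delta^{-1}$ survives, so it cannot be approached by Lebesgue-exponent interpolation from below.

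The paper avoids this by not interpolating between two physical $L^p$ estimates at all: it embeds the relevant operator in a Stein analytic family $U_\alpha$ (with the factor $\delta^{\frac43\alpha+\frac23}$ and the fractional-integration kernel $a_\alpha$) and interpolates between $\|U_{1+it}\|_{L^1\to L^\infty}$ and $\|U_{-\frac12+it}\|_{L^2\to L^2}$. Neither endpoint corresponds to a statement of the form \eqref{eq: main} for some smaller $p$, so the time restriction it inherits is uniformly $T\delta\ll1$, used only inside Lemma \ref{lem-1} via Poisson summation on the exponential sums. Your $\mathbb T$–Fourier decomposition and the multiplicity count for $n\mapsto n^2+(\kappa-n)^2$ do hit the same underlying arithmetic that the paper handles with Lemma \ref{lem-1}, but the way you would feed that count into the exponents is left as a wish (``one must verify that its interplay\dots produces no power of $\delta$ worse than $\delta^{2-4/p}$''), and the interpolation scaffolding you would attach it to does not exist.
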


It is worth noting that the condition $T\delta \ll 1$ is necessary when $p=\frac{12}{7}$.

\subsection{Background, motivation and some discussions}
In the setting of linear Schr\"odinger equation on waveguide $\mathbb{R}\times \mathbb{T}$, estimate
\eqref{eq: main} is a natural but partial generalization of a bilinear restriction estimate in \cite{moyua1999restriction}, see also the $n=3$ case of Theorem 2.3 in \cite{tao1998bilinear}.  Note that unlike the Schr\"odinger equation on $\mathbb{R}^{2}$, \eqref{eq: ls} does not have the full scaling symmetry, and does not admit the usual global in time Strichartz estimate, thus one needs the parameter $\delta$ and $T$ in the statement of \eqref{eq: main}. The constraint $\delta\gg 1$ is harmless, but assuming $\delta\ll T^{-1}$ is unfavorable, since in some sense, this means that the associated Schr\"odinger wave travels only $T\delta\ll 1$. However, a direct generalization of Theorem 2.3 (in case $n=3$) in \cite{tao1998bilinear} to our setting does not hold, and we will discuss a counter-example in the appendix. It should be noted that due to our constraint $T\delta\ll 1$, the estimate in some sense falls into the so-called semiclassical regime, thus it is possible that one can extend this result to more general manifolds, see for example \cite{burq2004strichartz},\cite{hani2012bilinear} and reference therein for this direction.

 Unlike the bilinear estimates in \cite{fan2016on}, \cite{zhao2021Long}, the bilinear estimates of form \eqref{eq: main} are closely related to the concentration compactness theory to the mass critical NLS. As well explained in \cite{begout2007mass}, after pioneering work of \cite{bourgain1998refinements}, \cite{merle1998compactness}, in $\mathbb{R}^{n}$, one can systematically transfer bilinear restriction type estimates from \cite{tao2003bilinear},      
\begin{equation}\label{eq: bilinearestimat}
\| e^{it\Delta_{\R^n}}f \cdot  e^{it\Delta_{\R^n}} g    \|_{L^r(\R \times \mathbb{R}^n)}\lesssim  \|\widehat{f} \|_{L^{p}(\mathbb{R}^n)} \|\widehat{g} \|_{L^{p}(\mathbb{R}^n)},
\end{equation}
where $r=\frac{n+2}{n}, \frac{2}{p^\prime}>\frac{n+3}{n+1} \frac{1}{r}$,  into 
\begin{equation}\label{eq: xpqestimate}	
\|e^{it\Delta_{\R^n}}f\|_{L^q(\R \times \mathbb{R}^n)} \lesssim \|\widehat{f}\|_{X^{p,q}},
\end{equation}
where $q=\frac{2(n+2)}{2}$ and the space $X^{p,q}$ is defined by
$$ \|g\|_{X^{p,q}}= \left(\sum_{\delta \in 2^\mathbb{Z}} \delta^{\frac{n}{2}\frac{p-2}{p}q}\sum_{\theta \text{ is a dyadic cube of sidelength } \delta} \| g \|_{L^p(\theta)}^q  \right)^{\frac1q} .$$
The $X^{p,q}$ space estimate will leads a to concentration compactness result, which roughly says the following. \\
\textbf{One rough statement of concentration compactness}:
Let $f_{n}$ be bounded in $L^{2}(\mathbb{R}^{d})$, with
\begin{equation}
\limsup_{n\rightarrow \infty}\|e^{it\Delta}f_{n}\|_{L_{t}^{p}L_{x}^{q}(\mathbb{R}\times \mathbb{R}^{d})}>0,
\end{equation}
then $f_{n}$ (up to extracting subsequence), must concentrates at certain scale, associated with the symmetries of the linear Schr\"odinger equation (time translation, space translation, Galilean, scaling).

We refer to \cite{begout2007mass}, \cite{bourgain1998refinements}, \cite{merle1998compactness} for more details. Ever since the work of \cite{kenig2006global},\cite{kenig2008global}, concentration compactness techniques have been widely applied in the study of nonlinear dispersive PDEs. Furthermore, after the work of \cite{ionescu2012energy},\cite{ionescu2010global}, it has been understood that a concentration compactness analysis can systematically transfer a scattering result in Euclidean space to a global wellposedness result in a manifold.

Our motivation to study estimates of form \eqref{eq: main}, is to explore the concentration compactness for defocusing cubic NLS on $\mathbb{R}\times \mathbb{T}$.

 Note that this equation is special in the sense it is mass critical, thus to study its wellposedness at critical regularity $L^{2}$ requires a no derivative loss Strichartz estimate. Such an estimate is available in $\mathbb{R}\times \mathbb{T}$, \cite{takaoka20012d}, but simply wrong in 2D Tori. There is also a global in time Strichartz-type estimate on $\mathbb{R}\times \mathbb{T}$, \cite{barron2021Global} and we refer to \cite{barron2021ON} for the global in time Strichartz estimate on the more general waveguide.   On the other hand, one should expect that, after the work \cite{takaoka20012d}, if one can establish a concentration compactness result (for linear Schr\"odinger equation) on $\mathbb{R}\times \mathbb{T}$, then one can transfer Dodson's scattering result \cite{dodson2015global} in $\mathbb{R}^{2}$ to establish GWP for defocusing cubic NLS on $\mathbb{R}\times \mathbb{T}$.

\subsection{Proof Strategy}
We mainly follow \cite{tao1998bilinear}, using interpolation with analytic families, \cite{stein1956interpolation}, to reduce the study of an $L^{2}$ estimate.  By almost orthogonality principle, and in the setting of \cite{tao1998bilinear}, it will follow from certain non-stationary phase analysis.

Here, we will need some analytic number theory argument to replace the usual integration by parts argument in real line, since the analysis on $\mathbb{T}$ will natural introduce certain study of exponential sums.

\subsection{Notation}
We use $A\less_s B$ to denote an estimate of the form $A\le CB$ for a constant $C$ which depends on some parameter $s$. Say $A\sim_s B$, if $A\less_s B$ and $A\gea_s B$. We write $A \ll B$ if there exists a small constant $c>0$ such that $A\le cB$.

We usually consider a function $F$ defined on $\R \times \mathbb{Z}$. For simplicity, we denote
$$ \int_{\R \times \mathbb{Z}} F(y) {\rm d} y := \sum_{n\in \mathbb{Z}}\int_{\R} F(\xi,n) {\rm d} \xi.$$

For a function $f$ on $\R \times \mathbb{T}$, we define the Fourier transform of $f$ by
$$ \widehat{f}(\xi,n)= \int_{\R \times \mathbb{T}} e^{-2\pi i (x_1 \xi + x_2 n)} f(x_1,x_2) {\rm d} x_1 {\rm d} x_2.$$

We write $e^{it \Delta} $ for the solution operator to the linear Schr\"odinger equation
$$ iu_{t}+\Delta u=0$$
on $\R \times \mathbb{T}$, that is
$$ e^{it \Delta} f (x)= \int_{\R \times \mathbb{Z}} e^{2\pi i (x\cdot y- 2\pi t |y|^2 )} \widehat{f}(y) {\rm d} y.$$

\subsection{Acknowledgement}
Thanks Prof.Zehua Zhao and Prof.Chenjie Fan for a lot of discussions for NLS on waveguide $\mathbb{R}^{n}\times \mathbb{T}^{m}$, and also proposes the study of NLS on $\mathbb{R}\times \mathbb{T}$, which leads to study in current article. This research has been partially supported by the CAS Project for Young Scientists in Basic Research, Grant No. YSBR-031.
\section{Reduction of Theorem \ref{thm: main} into an $L^{2}$ estimate}
In this section, we will calculate $\| e^{it\Delta}f \cdot  e^{it\Delta} g    \|_{L^2([0,T] \times \mathbb{R} \times \mathbb{T})}^2$ directly, and reduce the estimate (\ref{eq: main}) into an $L^{2}$ estimate. The calculation in this section is standard, and we refer to \cite{tao1998bilinear},\cite{barron2021Global} for similar discussions.

Let $\phi\in \mathcal{S}(\R) $ be a nonnegative radial decreasing Schwartz function with $\supp(\widehat{\phi})\subset [-1,1]$ and  $\phi \ge 1_{[-1,1]}$. We have
\begin{align*}
&\| e^{i t \Delta} f \cdot  e^{i t\Delta} g    \|_{L^2([0,T] \times \R \times \mathbb{T})}^2 \\
\le&\|\sqrt{\phi(t/T)} e^{i t\Delta} f \cdot e^{i t\Delta} g    \|^2_{L^2(\R \times \R \times \mathbb{T})} \\
=&\int_\R \int_{\R \times \mathbb{T}} \phi(\frac{t}{T}) e^{i t\Delta} f \cdot e^{i t\Delta} g \cdot \overline{e^{i t\Delta} f} \cdot \overline{e^{i t\Delta} g} \\
=& T  \int_{(\R\times \mathbb{Z})^4} \widehat{\phi}\lf(T Q(x,y,z,w) \ri)\delta_0(x+y-z-w) \widehat{f}(x) \overline{\widehat{f}(z)}  \widehat{g}(y) \overline{\widehat{g}(w)}   {\rm d} x {\rm d} y {\rm d} z {\rm d} w,
\end{align*}
where $\delta_0$ is the dirac measure, and
$$ Q(x,y,z,w)=|x|^2+|y|^2-|z|^2-|w|^2.$$
Let $\chi_1, \chi_2$ be the smooth cut-off functions of $\theta_1, \theta_2$ and define
$$ \chi(x,y,z,w)= \chi_1(x) \chi_1(z) \chi_2(y) \chi_2(w).$$
From the positivity of the kernel, and $|\widehat{\phi}(\cdot)|\less \phi(\frac12 \cdot)$, we may consider the 4-linear functional
\begin{align*}
& L(F_1,F_2,G_1,G_2) \\
:=& T  \int_{(\R\times \mathbb{Z})^4} \phi\lf(\frac12 T Q(x,y,z,w) \ri)\delta_0(x+y-z-w) \chi(x,y,z,w) \\
& \quad \cdot F_1(x) F_2(z)  G_1(y) G_2(w)   {\rm d} x {\rm d} y {\rm d} z {\rm d} w .
\end{align*}
To prove (\ref{eq: main}), it suffices to show that
\begin{equation}\label{eq:main-2}
|L(F_1,F_2,G_1,G_2)|\less \delta^{-\frac23} \|F_1\|_{L^{\frac{12}{7}}}\|F_2\|_{L^{\frac{12}{7}}}\|G_1\|_{L^{\frac{12}{7}}}\|G_2\|_{L^{\frac{12}{7}}}.
\end{equation}
One has
$$ |L(F_1,F_2,G_1,G_2)|\less \|F_1\|_{L^\infty} \|F_2\|_{L^1} \sup_{z \in \theta_1} |\widetilde{L}_{z}(G_1,G_2)|, $$
where
\begin{align*}
 & \widetilde{L}_{z}(G_1,G_2) \\
 =& T \int_{(\R\times \mathbb{Z})^3} \phi\lf(\frac12 T Q(x,y,z,w) \ri)\delta_0(x+y-z-w)\chi(x,y,z,w)   G_1(y) G_2(w)     {\rm d} x {\rm d} y {\rm d} w \\
 =& T \int_{(\R \times \mathbb{Z})^2}  \phi\lf( T (y-z)\cdot (y-w) \ri)\widetilde{\chi}(y,w) G_1(y) G_2(w) {\rm d} y {\rm d} w,
\end{align*}
where $\widetilde{\chi}(y,w)=\chi_{1}(w-y+z)\chi_{2}(y)\chi_{2}(w)$.

We may assume without loss of generality that $z=(0,0)\in \theta_1$. Consider
\begin{align*}
 \widetilde{L}(G_1,G_2)= T \int_{(\R \times \mathbb{Z})^2}  \phi\lf( T y\cdot (y-w) \ri)\widetilde{\chi}(y,w) G_1(y) G_2(w) {\rm d} y {\rm d} w.
\end{align*}
It suffices to prove that
\begin{equation}\label{eq:L}
|\widetilde{L}(G_1,G_2)|\less \delta^{-\frac23} \|G_1\|_{L^\frac32} \|G_2\|_{L^\frac32},
\end{equation}
because when we have the estimate
$$|L(F_1,F_2,G_1,G_2)| \less \delta^{-\frac23} \|F_1\|_{L^\infty} \|F_2\|_{L^1}  \|G_1\|_{L^\frac32} \|G_2\|_{L^\frac32}, $$
then we get (\ref{eq:main-2}) by symmetries of $F_1,F_2,G_1,G_2$ and multi-linear interpolation (or the H\"older inequality).

Now, we define
$$ U(G)(y)=T \delta^{\frac23} \int_{\R \times \mathbb{Z}}  \phi\lf( T y\cdot (y-w) \ri)\widetilde{\chi}(y,w) G(w){\rm d} w,$$
so (\ref{eq:L}) reduces to
$$ \|U(G)\|_{L^3} \less \|G\|_{L^{\frac32}}.$$
Fix a $\eta \in C_c^\infty(\R)$ with $\eta(s)=1$ when $|s|\le \frac{1}{100}$. Define
$$ a_\alpha(s)=e^{\alpha^2} \frac{s_+^{\alpha-1}}{\Gamma(\alpha)}\eta(s),$$
where $\alpha \in \mathbb{C}$.
We imbed the operator $U$ in the analytic family $U_\alpha$ defined by
$$ U_\alpha(G)(y)=T\delta^{\frac43 \alpha +\frac23} \int_{\R \times \mathbb{Z}} \lf(\phi(T \delta^2 \cdot) \ast a_\alpha (\cdot) \ri)(\delta^{-2} (y-w)\cdot y ) \widetilde{\chi}(y,w) G(w) {\rm d} w.$$
The result we need will follow from
\begin{equation}\label{bilinear estimate-thm-eq-2}
\|U_{1+it}G\|_{L^\infty} \less \|G\|_{L^1},
\end{equation}
and
\begin{equation}\label{bilinear estimate-thm-eq-3}
\|U_{-\frac{1}{2}+it}G\|_{L^2} \less \|G\|_{L^2}
\end{equation}
uniformly for all real $t$.

It is easy to prove (\ref{bilinear estimate-thm-eq-2}).  So it remains to prove the $L^2$ estimate (\ref{bilinear estimate-thm-eq-3}). It is well known that $\widehat{a}_{-\frac{1}{2}+it}$ has order $\frac12$, i.e., $|\widehat{a}_{-\frac{1}{2}+it}^{(N)}(s)|\less_N (1+|s|)^{\frac12-N}$ for any $N\in \mathbb{N}$ and uniformly for $t\in \R$. For more properties about the function $a_\alpha$, we refer to \cite{stein1993Harmonic} Chapter 9, \S 1.2.3.

Now we write  $U_{-\frac12+it}$ as
\begin{align*}
U_{-\frac{1}{2}+it}G(y)&=c  \delta^{-2+\frac43 it}  \int_{\R \times \mathbb{Z}} \int_{\R} e^{2\pi i \delta^{-2} s (y-w)\cdot y } \widehat{\phi}(\frac{s}{T\delta^2}) \widehat{a}_{-\frac{1}{2}+it}( s) G(w) \widetilde{\chi}(y,w) {\rm d} s {\rm d} w.
\end{align*}
 So we are done if we could prove the following $L^2$ estimate.
\begin{proposition}\label{prop:L2 estimate}
Suppose $1 \ll \delta \ll T^{-1}, \theta \in \mathcal{C}_\delta$ with $dist((0,0),\theta)\gg \delta$. If a smooth function $b$ has order $\frac12$ and satisfies $\supp(b)\subset [-\frac{T\delta^2}{4}, \frac{T\delta^2}{4}]$. Then the operator
\begin{align*}
V(G)(y)&=  \delta^{-2}  \int_{\R \times \mathbb{Z}} \int_{\R} e^{2\pi i \delta^{-2} s (y-w)\cdot y } b(s) G(w)  \widetilde{\chi}(y,w) {\rm d} s {\rm d} w
\end{align*}
maps $L^2(\R\times \mathbb{Z})$ to $ L^2(\R\times \mathbb{Z})$, where 
$$\widetilde{\chi}(y,w)=  \chi_{\theta}(y)\chi_{\theta}(w)   $$
and $\chi_{\theta}$ is a smooth cut-off function on $\theta$.
\end{proposition}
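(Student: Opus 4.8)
The plan is to establish the $L^2\to L^2$ bound for $V$ by a $TT^*$ argument, reducing it by Schur's test to a pointwise kernel estimate in which the torus direction produces exponential sums that play the role of the non-stationary phase (integration by parts) in the Euclidean argument of \cite{tao1998bilinear}.

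Since $\|V\|^2_{L^2\to L^2}=\|VV^*\|_{L^2\to L^2}$, it suffices to show $\sup_y\int_{\mathbb R\times\mathbb Z}|K(y,y')|\,{\rm d}y'\lesssim 1$, where $K$ is the kernel of $VV^*$. Because the inner ($w$-)phase in $VV^*$ is linear in $w$, one can carry out that integration and obtain
\[
K(y,y')=\delta^{-4}\chi_\theta(y)\chi_\theta(y')\iint b(s)\,\overline{b(s')}\;e^{2\pi i\delta^{-2}(s|y|^2-s'|y'|^2)}\;\Psi_\theta\big(\delta^{-2}(sy-s'y')\big)\,{\rm d}s\,{\rm d}s',
\]
where $\Psi_\theta(\xi):=\int_{\mathbb R\times\mathbb Z}|\chi_\theta(w)|^2\,e^{-2\pi iw\cdot\xi}\,{\rm d}w$. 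The function $\Psi_\theta$ behaves very differently in its two variables: in the $\mathbb R$-variable it is the Fourier transform of a smooth bump of width $\sim\delta$, hence rapidly decaying, $|\Psi_\theta(\xi_1,\cdot)|\lesssim_M\delta^2(1+\delta|\xi_1|)^{-M}$ (integration by parts); in the $\mathbb T$-variable it is a trigonometric polynomial of degree $\lesssim\delta$, and an elementary summation-by-parts/geometric-sum estimate yields $|\Psi_\theta(\cdot,\xi_2)|\lesssim_M\delta^2\big(1+\delta\,\mathrm{dist}(\xi_2,\mathbb Z)\big)^{-M}$. So the integrand is negligible unless $|sy_1-s'y'_1|\lesssim\delta$ and $sy_2-s'y'_2\in\delta^2\mathbb Z+O(\delta)$, i.e.\ unless $(s,s')$ lies in one of the ``resonance tubes'' $\mathcal T_N(y,y')$, $N\in\mathbb Z$, cut out by these conditions, the integer $N$ recording how many times the wave has wrapped the torus.

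The core of the argument is the analysis of these tubes, where both hypotheses enter. Using $\mathrm{dist}((0,0),\theta)\gg\delta$ one has $|y|\sim|y'|\gg\delta$ and the angle between $y$ and $y'$ small, so the $\mathcal T_N$ are thin tubes along a non-degenerate direction; on each tube the remaining phase $e^{2\pi i\delta^{-2}(s|y|^2-s'|y'|^2)}$ oscillates and can be integrated out by a van der Corput estimate, while the discrete structure of $\theta$ present throughout — in $\Psi_\theta$, in counting the tubes that meet the support $\{|s|,|s'|\le T\delta^2/4\}$ of $b(s)\overline{b(s')}$, and in the final integration over $y'\in\theta$ — is handled by elementary exponential-sum bounds. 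The hypothesis $1\ll\delta\ll T^{-1}$ is what makes these estimates close: near the ``$N=0$'' resonance it is precisely the mechanism by which the problem becomes Euclidean in the regime $T\delta\ll1$, and the estimate genuinely fails in general without it (cf.\ the appendix). Carrying this out gives $\sup_y\int_\theta|K(y,y')|\,{\rm d}y'\lesssim1$, whence Schur's test yields $\|VV^*\|_{L^2\to L^2}\lesssim1$ and the Proposition follows.

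I expect the tube analysis to be the main obstacle: one must quantify $\Psi_\theta$ near its resonant frequencies, parametrise and count the tubes $\mathcal T_N$, and estimate the attendant one-dimensional exponential sums, all uniformly in the position of $\theta$ — in particular with constants independent of $R=\mathrm{dist}((0,0),\theta)$, which may be arbitrarily large, and uniformly over those $\theta$ whose $\mathbb Z$-frequency is large, so that many resonances $N\neq0$ must be controlled at once. An essentially equivalent bookkeeping, closer in spirit to \cite{tao1998bilinear}, decomposes $b$ dyadically in $|s|$ and runs an almost-orthogonality (Cotlar--Stein) argument, with off-diagonal decay from non-stationary phase and the exponential sums on $\mathbb T$ as the genuinely new ingredient; in that formulation, working with $V^*V$ in place of $VV^*$, the overlap integral $\int_\theta|\chi_\theta(y)|^2e^{2\pi i\delta^{-2}[(s'-s)|y|^2+(sw-s'w')\cdot y]}\,{\rm d}y$ exhibits the quadratic Gauss sums in the torus variable explicitly.
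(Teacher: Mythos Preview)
The paper takes exactly the route you mention as an alternative at the end: it decomposes $b=\sum_k b\phi_k$ dyadically in $|s|\sim2^k$, writes $V=\sum_k V_k$, and runs Cotlar--Stein on $V^*V$ (not $VV^*$). The kernel $K_{k,j}(w,w')$ of $V_j^*V_k$ is an integral over $s,s'$ together with an integral/sum over $y\in\theta$.

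The substantive technical input, however, is not what you anticipate. You expect quadratic Gauss sums in the $y_2$-variable; the paper instead uses Poisson summation. Its Lemma~\ref{lem-1} observes that if a phase $P$ satisfies $|P'|\ll1$ on the relevant support, then $\sum_n e^{2\pi iP(n)}\kappa(n/\delta)$ behaves exactly like the corresponding $\mathbb R$-integral (the nonzero Poisson frequencies are non-stationary and contribute $O(\delta^{-M})$). In the present problem $|\partial_{y_2}(\delta^{-2}S)|\lesssim|s|\,\delta^{-1}\lesssim T\delta\ll1$, so the $y_2$-sum is, up to negligible error, the Euclidean integral, and one is back to the non-stationary-phase calculus of \cite{tao1998bilinear}. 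This is precisely where the hypothesis $T\delta\ll1$ enters. The off-diagonal bound $\|V_j^*V_k\|\lesssim2^{-\frac32|k-j|}$ comes from part (1) of the Lemma; the diagonal bound uses the substitution $s'=s+h$ and an algebraic identity expressing $h\,e^{2\pi i\delta^{-2}S}$ and $(w_l'-w_l)\,e^{2\pi i\delta^{-2}S}$ as linear combinations of $\partial_s,\partial_{y_1},\partial_{y_2}$ acting on $e^{2\pi i\delta^{-2}S}$ (with denominator $s(2y-w')\cdot y$, non-degenerate since $\mathrm{dist}(0,\theta)\gg\delta$), followed by integration by parts in $s,y_1$, part (2) of the Lemma for the $y_2$-sum, and Schur.

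Your primary $VV^*$ route is a genuinely different organisation of the same information, and is not obviously wrong, but the resonance-tube step you flag as the main obstacle is real and you have not carried it out: integrating $w$ out first produces the periodic factor $\Psi_\theta$ and forces you to track all torus wraps $N$ simultaneously, and summing the tube contributions in absolute value cannot close uniformly in $\mathrm{dist}(0,P_{\mathbb Z}\theta)$. Exploiting cancellation between the $\mathcal T_N$ amounts to re-summing the $N$-series, which is Poisson summation again and leads back to the paper's mechanism. The paper's choice of $V^*V$ keeps $y$ as the inner variable, so that the phase derivative in the discrete direction is small and Poisson collapses the sum to the integral in one stroke; what your approach would buy, if completed, is a more explicit geometric picture of the resonances, whereas the paper's buys that no exponential-sum input beyond Poisson summation is needed at all.
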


\section{Proof of $L^{2}$ estimate}
In this section, we prove Proposition \ref{prop:L2 estimate} to complete the proof of our main theorem. To prove Proposition \ref{prop:L2 estimate} in Euclidean case, refer to \cite{tao1998bilinear}, the authors use the classical results about Fourier integral operators (see \cite{1971Fourier}), maybe this method is not applicable to the waveguide case. We use the almost orthogonality principle and the Schur's lemma to replace this part as in \cite{tao1998bilinear}. We refer to \cite{1990Averages} for the similar discussion.

In the proof of Proposition \ref{prop:L2 estimate}, we will need a lemma to achieve the same effect as one-dimensional oscillatory integral.

\begin{lemma} \label{lem-1}
Let $\kappa \in C_c^\infty(\R) $ and $P\in C^\infty(\R)$.

(1) If $|P^\prime(\delta x)|  \sim \delta^{-1} \lambda \ll 1$ for any $x \in \supp(\kappa)$, where $\lambda \ge 1$ is a constant. Then
\begin{equation}\label{lem-1-eq-1}
|\sum_n e^{2\pi i P(n)} \kappa(\frac{n}{\delta})|\less_{N,\kappa} \lambda^{-N} \delta
\end{equation}
holds for any $N \in \mathbb{N}^*$.

(2) If $|P^\prime(\delta x)|\ll 1$ for any $x \in \supp(\kappa)$. Then
\begin{equation}\label{lem-1-eq-2}
\lf |\sum_n (e^{2\pi i P(\cdot)})^{(N)}(n) \kappa (\frac{n}{\delta})\ri| \less_{N,\kappa} \delta^{-N+1}
\end{equation}
holds for any $N \in \mathbb{N}^*$.
\end{lemma}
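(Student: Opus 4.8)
The plan is to pass from these exponential sums to sums of genuine one–dimensional oscillatory integrals by Poisson summation, and then to estimate each resulting integral by non‑stationary phase (repeated integration by parts). This is the substitute, on the compact factor $\mathbb{T}$, for the plain integration‑by‑parts argument one would use on $\R$.

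For part~(1), apply the Poisson summation formula to the smooth, compactly supported function $F(x)=e^{2\pi iP(x)}\kappa(x/\delta)$ and rescale $x=\delta u$:
\[
\sum_n e^{2\pi iP(n)}\kappa(n/\delta)=\sum_{k\in\mathbb Z}\widehat F(k),\qquad \widehat F(k)=\delta\int_{\R}e^{2\pi i\Phi_k(u)}\kappa(u)\,{\rm d}u,\quad \Phi_k(u):=P(\delta u)-k\delta u .
\]
On $\supp\kappa$ the hypothesis reads $\delta|P'(\delta u)|\sim\lambda$ with $1\le\lambda\ll\delta$, so $\Phi_k'(u)=\delta\bigl(P'(\delta u)-k\bigr)$ satisfies $|\Phi_0'|\sim\lambda$ and, for $k\neq0$, $|\Phi_k'|\gtrsim\delta|k|$; in particular neither phase has a stationary point. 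Integrating by parts $N$ times in the $k=0$ integral gains a factor $\less_{N,\kappa}\lambda^{-N}$, so the $k=0$ term is $\less_{N,\kappa}\lambda^{-N}\delta$, which is already the asserted bound; and for $k\neq0$, performing $\max(N,2)$ integrations by parts bounds the total remaining contribution by $\less\sum_{k\neq0}(\delta|k|)^{-\max(N,2)}\delta\less\delta^{1-\max(N,2)}\le\lambda^{-N}\delta$, using $1\le\lambda\ll\delta$. This gives \eqref{lem-1-eq-1}.

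For part~(2), repeat this with $G(x)=\bigl(e^{2\pi iP(\cdot)}\bigr)^{(N)}(x)\,\kappa(x/\delta)$: in $\widehat G(k)=\int_\R e^{-2\pi ikx}\bigl(e^{2\pi iP}\bigr)^{(N)}(x)\,\kappa(x/\delta)\,{\rm d}x$ integrate by parts $N$ times to move all derivatives off $e^{2\pi iP}$, producing a finite sum of terms with prefactors $\sim k^{N-j}\delta^{-j}$ ($0\le j\le N$), each multiplying $\int_\R e^{2\pi i(P(x)-kx)}\kappa^{(j)}(x/\delta)\,{\rm d}x$. When $k=0$ only the $j=N$ term survives, and after rescaling it equals $(-1)^N\delta^{1-N}\int_\R e^{2\pi iP(\delta u)}\kappa^{(N)}(u)\,{\rm d}u=O_{N,\kappa}(\delta^{1-N})$ — this is the main term and already gives the claimed estimate. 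When $k\neq0$ the phase $P(x)-kx$ is again non‑stationary ($|P'(\delta u)|\ll1\le|k|$ on $\supp\kappa$), so each of these integrals is $O_M\bigl((\delta|k|)^{-M}\bigr)$ for every $M$; choosing $M=N+2$ makes $\sum_{k\neq0}|\widehat G(k)|\less\delta^{-N-1}$, negligible beside $\delta^{1-N}$. This proves \eqref{lem-1-eq-2}.

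The one point needing genuine care is legitimizing the repeated integrations by parts in parts~(1) and~(2), above all in the $k=0$ case of part~(1): besides the lower bound $|\Phi_0'|\sim\lambda$ one must know that a derivative landing on $1/\Phi_0'$ does not destroy the gain, i.e.\ that $|\Phi_0^{(j)}(u)|=\delta^j|P^{(j)}(\delta u)|\less\lambda$ on $\supp\kappa$ for $2\le j\le N$. In the application this is automatic, since there $P$ is a polynomial of degree at most two in the summation variable: then $P^{(j)}\equiv0$ for $j\ge3$, while $\delta^2|P''|\less\lambda$ is forced by $\delta|P'(\delta u)|\sim\lambda$ holding throughout the interval $\supp\kappa$ of length $\sim1$. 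Apart from this bookkeeping the argument is routine non‑stationary phase, so I expect this to be the only real obstacle.
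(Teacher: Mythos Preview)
Your proof is correct and follows essentially the same route as the paper: Poisson summation transforms the exponential sum into a sum of oscillatory integrals $\widehat F(m)$, and non-stationary phase handles the $m=0$ term (gain $\lambda^{-N}$) and the $m\neq0$ terms (gain $(\delta|m|)^{-M}$) separately; for part~(2) the paper reduces by induction and Leibniz to $\sum_n F^{(N)}(n)=\sum_m(2\pi im)^N\widehat F(m)$, which is just a reorganization of your direct integration-by-parts on $\widehat G(k)$. Your final paragraph correctly isolates the only real hypothesis the paper leaves implicit, namely the control $\delta^j|P^{(j)}(\delta u)|\less\lambda$ for $j\ge2$ needed to make the $k=0$ non-stationary phase work, and you are right that in the application $P$ is quadratic in the summation variable so this is automatic.
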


\begin{proof}
Let $F(\xi)=e^{2\pi i P(\xi)} \kappa(\frac{\xi}{\delta})$.

We first prove (\ref{lem-1-eq-1}). Using Poisson's summation formula,  (\ref{lem-1-eq-1}) reduces to
$$| \sum_m \widehat{F}(m)|\less_{N,\kappa} \lambda^{-N} \delta.$$
Using one-dimensional oscillatory integral, there holds
$$ |\widehat{F}(0)|=|\int e^{2\pi i P(\xi)} \kappa(\frac{\xi}{\delta}) {\rm d} \xi| \less_{N,\kappa} \lambda^{-N} \delta,$$
and
\begin{equation}\label{lem-1-eq-4}
 |\widehat{F}(m)|=|\int e^{2\pi i (P(\xi)-m\xi)} \kappa(\frac{\xi}{\delta}) {\rm d} \xi| \less_{N,\kappa} |m\delta|^{-N }\delta ,
\end{equation}
where $|m|\ge 1$.

Observe that we assume that $\delta^{-1} \lambda \ll 1$, so (\ref{lem-1-eq-1}) holds.

We now turn to prove (\ref{lem-1-eq-2}). By induction, we only need to prove that
\begin{equation}\label{lem-1-eq-3}
|\sum_n F^{(N)}(n)| \less_{N,\kappa} \delta^{-N+1}.
\end{equation}
Similar as (\ref{lem-1-eq-1}), we use Poisson's summation formula, then  (\ref{lem-1-eq-3}) reduces to
\begin{equation*}
|\sum_m m^N \widehat{F}(m) |\less_{N,\kappa} \delta^{-N+1}.
\end{equation*}
By (\ref{lem-1-eq-4}) and we replace $N$ by $N+2$, then there holds
$$ |m^N \widehat{F}(m)| \less_{N,\kappa} m^{-2} \delta^{-(N+1) } ,$$
when $|m|\ge 1$, so
$$\sum_m |m^N \widehat{F}(m) |\less_{N,\kappa} \delta^{-N+1}$$
holds, we complete the proof.
\end{proof}

\begin{remark}\label{rmk-1}
By using oscillation integral estimation and $N$ times integral by parts respectively, we know that if we replace the summation of the integer variable $n\in \mathbb{Z}$ by the integral of real variable $x\in \R$, the estimates in Lemma \ref{lem-1} also hold, and in this time, we don't need the assumption $|P^\prime(\delta x)|\ll 1$.
\end{remark}

Now, we turn to complete the proof of Proposition \ref{prop:L2 estimate}.

\noindent{\em Proof of Proposition \ref{prop:L2 estimate}.} We will assume that $dist(0, P_{\mathbb{Z}}\theta)\gg\delta$, and in the case $dist(0, P_{\mathbb{R}}\theta)\gg \delta$, we only need to use Remark \ref{rmk-1} instead of Lemma \ref{lem-1}.
 Define
\begin{align*}
V_k(G)(y)&=  \delta^{-2}  \int_{\R \times \mathbb{Z}} \int_{\R} e^{2\pi i \delta^{-2} s (y-w)\cdot y } b(s) \phi_k(s) G(w) \widetilde{\chi}(y,w) {\rm d} s {\rm d} w,
\end{align*}
where $0 \le k \less \log_2 (T\delta^2), \phi_0(s)=\phi(s)$ and $\phi_j(s)=\phi(\frac{s}{2^j})-\phi(\frac{s}{2^{j-1}})$ when $j\ge 1$. Denote $y=(y_1,y_2)\in \R \times \mathbb{Z}, w=(w_1,w_2)\in \R \times \mathbb{Z}$.

The kernel of $V_j^* V_k$ is
\begin{align*}
   & K_{k,j}(w,w^\prime) \\
   =& \delta^{-4} \int_{\R \times \mathbb{Z}} \int_{\R} \int_{\R} e^{2\pi i \delta^{-2} (s (y-w)\cdot y -s^\prime (y-w^\prime) \cdot y) }b(s) \phi_k(s) \overline{b(s^\prime) \phi_j(s^\prime)}\widetilde{\chi}(y,w) \widetilde{\chi}(y,w^\prime) {\rm d} s {\rm d} s^\prime {\rm d} y.
\end{align*}
When $|k-j|\gg 1$. Note that
$$    |\partial_{y_2}(\delta^{-2} (s (y-w)\cdot y -s^\prime (y-w^\prime) \cdot y))|\sim 2^k \delta^{-1} \ll 1, y_2 \in P_{\mathbb{Z}}\theta,$$ 
we could control $|K_{k,j}(w,w^\prime)|$ by C$\delta^{-2} 2^{-\frac32 |k-j|}$, by considering the summation of $y_2$ and  using Lemma \ref{lem-1}(1). So by Schur's lemma, we conclude that $\|V_j^* V_k\|_{L^2 \to L^2}\less 2^{-\frac32 |k-j|}$.

When $|k-j|\less 1$, we assume $k=j$ without loss of generality.  Changes the variable $s^\prime=s+h$, then we write $K_{k,k}(w,w^\prime)$ as
\begin{align*}
& \int_{\R}\Big(   \delta^{-4} \int_{\R \times \mathbb{Z}} \int_{\R} e^{2\pi i \delta^{-2} (s (w^\prime-w)\cdot y -h (y-w^\prime) \cdot y) }b(s) \phi_k(s) \overline{b(s+h) \phi_k(s+h)} \\
& \cdot \widetilde{\chi}(y,w) \widetilde{\chi}(y,w^\prime) {\rm d} s {\rm d} y \Big){\rm d} h \\
:=& \int_{\R} \widetilde{K}_{k,k}(w,w^\prime,h) {\rm d} h,
\end{align*}
and define
$$S_{w,w^\prime,h}(y,s)= s (w^\prime-w)\cdot y -h (y-w^\prime) \cdot y.$$
Now we treat $S$ as the function of $y$ and $s$, and we calculate the first partial derivative of $e^{2\pi i \delta^{-2} S }$, one has that
$$ \partial_{s}(e^{2\pi i \delta^{-2} S })=2\pi i \delta^{-2} e^{2\pi i \delta^{-2} S }\lf( (w-w^\prime)\cdot y  \ri), $$
$$ \partial_{y_1}(e^{2\pi i \delta^{-2} S })=2\pi i \delta^{-2} e^{2\pi i \delta^{-2} S}\lf( s(w_1^\prime-w_1)-h(2y_1-w_1^\prime)   \ri) , $$
$$ \partial_{y_2}(e^{2\pi i \delta^{-2} S })=2\pi i \delta^{-2} e^{2\pi i \delta^{-2} S }\lf( s(w_2^\prime-w_2)-h(2y_2-w_2^\prime)   \ri),$$
thus
\begin{align*}
&
\begin{pmatrix}
  h  \\
  w_1^\prime-w_1\\
  w_2^\prime-w_2
\end{pmatrix} e^{2\pi i \delta^{-2} S} \\
=&  (2\pi i \delta^{-2} s (2y-w^\prime)\cdot y)^{-1} \cdot \\
&\begin{pmatrix}
  s^2\partial_s(e^{2\pi i \delta^{-2} S})-y_1 s \partial_{y_1}(e^{2\pi i \delta^{-2} S}) -y_2 s \partial_{y_2}(e^{2\pi i \delta^{-2} S}) \\
  (2y_1-w_1^\prime)s\partial_s(e^{2\pi i \delta^{-2} S})+(2y_2-w_2^\prime)y_2 \partial_{y_1}(e^{2\pi i \delta^{-2} S}) -(2y_1-w_1^\prime)y_2 \partial_{y_2}(e^{2\pi i \delta^{-2} S})   \\
  (2y_2-w_2^\prime)s\partial_s(e^{2\pi i \delta^{-2} S})-(2y_2-w_2^\prime)y_1 \partial_{y_1}(e^{2\pi i \delta^{-2}S}) +(2y_1-w_1^\prime)y_1 \partial_{y_2}(e^{2\pi i\delta^{-2} S})
\end{pmatrix}.
\end{align*}
Then, integral by parts and Lemma \ref{lem-1}(2) conclude that
\begin{equation}\label{bilinear estimate-thm-eq-4}
|K(w,w^\prime)|\le \int_{\R} |\widetilde{K}_{k,k}(w,w^\prime,h)| {\rm d} h\less \delta^{-2} (2^k)^{2}\int_\R (1+|h|+2^k\delta^{-1}|w-w^\prime|)^{-N} {\rm d} h ,
\end{equation}
and then by Schur's lemma, we conclude that $\|V_k^* V_k\|_{L^2 \to L^2}\less 1$. Thus we complete the proof of Proposition \ref{prop:L2 estimate} because of the Cotlar-Knapp-Stein almost orthogonality lemma.

In the end of proof, we explain that how to get (\ref{bilinear estimate-thm-eq-4}). First, we have the trivial bound without oscillatory integral, that is 
\begin{align*}
  &|\widetilde{K}_{k,k}(w,w^\prime,h)|\\
  \less & \delta^{-4} \int_{\R \times \mathbb{Z}} \int_{\R} |b(s) \phi_k(s) \overline{b(s+h) \phi_k(s+h)} \widetilde{\chi}(y,w) \widetilde{\chi}(y,w^\prime)| {\rm d} s {\rm d} y \less \delta^{-2} (2^k)^{2}.
\end{align*}
Next we should consider the oscillatory term. We use the identity
\begin{align*}
   &  e^{2\pi i \delta^{-2} S}\\
=&h^{-1} (2\pi i \delta^{-2}  (2y-w^\prime)\cdot y)^{-1} \lf(   s^2\partial_s(e^{2\pi i \delta^{-2} S})-y_1 s \partial_{y_1}(e^{2\pi i \delta^{-2} S}) -y_2 s \partial_{y_2}(e^{2\pi i \delta^{-2} S})    \ri)
\end{align*}
$N$ times, then we have
$$ e^{2\pi i \delta^{-2} S}=h^{-N} \sum_{m,n,l\in \mathbb{N}:m+n+l=N} F_{m,n,l,w,w^\prime}(y,s) \partial_s^m \partial_{y_1}^n \partial_{y_2}^l  (e^{2\pi i \delta^{-2} S}).$$
We substitute the above equation into $\widetilde{K}_{k,k}(w,w^\prime,h)$. For fixed $m,n,l$, we use integral by parts of $s,y_1$ respectively $m,n$ times, then the remaining part is about the summation of $y_2 \in \mathbb{Z}$. Thus we could use Lemma \ref{lem-1}(2) to control the remaining part by $\delta^{-2} (2^k)^{2} |h|^{-N}$, because $|\partial_{y_2}(\delta^{-2} S)|\less |s| \delta^{-1}\less T \delta \ll 1$.

Similarly, we can control $|\widetilde{K}_{k,k}(w,w^\prime,h)|$ by $\delta^{-2} (2^k)^{2} (2^k\delta^{-1}|w-w^\prime|)^{-N}$.

\hfill$\Box$\vspace{2ex}

\appendix
\section{Some further discussions}
For $0< T \less \delta^{-2}$ and $f,g$ which are "$\delta-$transverse", we have the stronger estimate
$$ \| e^{it\Delta}f \cdot  e^{it\Delta} g    \|_{L^2([0,T] \times \mathbb{R} \times \mathbb{T})}\lesssim (T^{\frac12}\delta) \delta^{2-\frac{4}{p}}   \|\widehat{f} \|_{L^p(\mathbb{R} \times \mathbb{Z})} \|\widehat{g} \|_{L^p(\mathbb{R} \times \mathbb{Z})},\quad p=\frac{12}{7}.$$
The proof relies on the calculation in Section 2, and we give a brief proof here. As in Section 2, it suffices to show that
$$ \|U(G)\|_{L^3} \less \|G\|_{L^{\frac32}},$$
where 
$$ U(G)(y)=\delta^{-\frac43} \int_{\R \times \mathbb{Z}}  \phi\lf( T y\cdot (y-w) \ri)\widetilde{\chi}(y,w) G(w){\rm d} w.$$
Then by the H\"older inequality,
\begin{align*}
  &\|U(G)\|_{L^3} \less \delta^{\frac{2}{3}} \|U(G)\|_{L^\infty}  
 \less  \delta^{-\frac23} \|G\|_{L^{\frac32}} \| \phi\lf( T y\cdot (y-w) \ri)\chi_\theta(w)\|_{L^3_w L^\infty_y} \\
 \less & \delta^{-\frac23} \|G\|_{L^{\frac32}} \|\chi_\theta(w)\|_{L^3_w L^\infty_y} \less \|G\|_{L^\frac23}.
\end{align*}

In the fact, as our expected goal, we wish to prove the estimate
\begin{equation}\label{eq-wish}
  \| e^{it\Delta}f \cdot  e^{it\Delta} g    \|_{L^2([0,\delta^{c(p)}] \times \mathbb{R} \times \mathbb{T})}\lesssim \delta^{2-\frac{4}{p}}   \|\widehat{f} \|_{L^p(\mathbb{R} \times \mathbb{Z})} \|\widehat{g} \|_{L^p(\mathbb{R} \times \mathbb{Z})}
\end{equation}
for all $p\in [\frac{12}{7},2]$, where $c(p)$ is a function of $p$ with $c(p)=-1$ when $p=\frac{12}{7}$ and $c(p)=1$ when $p=2$. See Appendix 2,  we guess that 
$$c(p)=8\cdot \frac{p-2}{4-p}.$$
Interestingly, by the Strichartz estimate on $\R \times \mathbb{Z}$ which appears in \cite{takaoka20012d} and Theorem \ref{thm: main},  (\ref{eq-wish}) holds for $p=\frac{12}{7}$ and $p=2$, but we failed to find an interpolation method to get (\ref{eq-wish}) for $\frac{12}{7}<p<2$.

\section{A (counter) example }
In this appendix, we explain that, the requirement $T\delta \less 1$ in Theorem \ref{thm: main} is necessary. Furthermore, we can get a necessary condition for the inequality (\ref{eq: main}) to hold. Because of the stronger estimate in Appendix A, we assume that $T \gg \delta^{-2}$.

Let $\theta_1=[0,\delta] \times \{0,1,\cdots,\delta\}, \theta_2=[0,\delta] \times \{100\delta,100\delta+1,\cdots,101\delta\}$. We choose a nonnegative function $\varphi\in C_c^\infty([-1,1])$ with $\widehat{\varphi}$ is nonnegative and $\widehat{\varphi}\ge 1_{[-1,1]}$.

As the calculation in Section 2, one has
\begin{align*}
&\| e^{i \Delta} f \cdot  e^{i \Delta} g    \|_{L^2([-T,T] \times \R \times \mathbb{T})}^2 \\
\gea &\|\sqrt{\varphi(t/T)} e^{i \Delta} f \cdot e^{i \Delta} g    \|^2_{L^2(\R \times \R \times \mathbb{T})} \\
=&\int_\R \int_{\R \times \mathbb{T}} \varphi(\frac{t}{T}) e^{i \Delta} f \cdot e^{i \Delta} g \cdot \overline{e^{i \Delta} f} \cdot \overline{e^{i \Delta} g} \\
=& T \int_{(\R\times \mathbb{Z})^3} \widehat{\varphi}(2T(y-x)\cdot (y-w) )\widehat{f}(x) \overline{\widehat{f}(x+y-w)}  \widehat{g}(y) \overline{\widehat{g}(w)} {\rm d}x {\rm d}y {\rm d}w.
\end{align*}
Now we choose a small constant $c>0$, and let $\widehat{f}=1_{[0,cT^{-\frac12}]\times \{0\}}, \widehat{g}=1_{[0,cT^{-\frac12}]\times \{100\delta\}}$. Thus, for $p \in [\frac{12}{7},2]$, (\ref{eq: ls}) holds unless
$$ T \cdot (T^{-\frac12})^3 \less (\delta^{2-\frac4p} T^{-\frac1p})^2,$$
that is
$$ T \less \delta^{8\cdot \frac{p-2}{4-p}},$$
especially, when $p=\frac{12}{7}$, that is
$$ T \less \delta^{-1}.$$

 \bibliographystyle{amsplain}
\bibliography{BG.bib}
\bibliographystyle{plain}
\end{document}